\newtheorem{theorem}{Theorem}
\newtheorem{corollary}{Corollary}
\newtheorem{lemma}{Lemma}
\renewcommand{\Bbb}[1]{\mathbb{#1}}
\newcommand{\N}{{\Bbb N}}         % natural numbers
\newcommand{\Q}{{\Bbb Q}}         % rational numbers
\newcommand{\R}{{\Bbb R}}         % real numbers
\newcommand{\Z}{{\Bbb Z}}         % integer numbers
\newcommand{\La}{\Lambda}
\newcommand{\cD}{{\cal D}}
\newcommand{\cE}{{\cal E}}
\newcommand{\cM}{{\cal M}}
\newcommand{\cN}{{\cal N}}
\newcommand{\cS}{{\cal S}}
\newcommand{\ve}{\varepsilon}
\newcommand{\vv}[1]{{\mathbf{#1}}}
\renewcommand{\le}{\leqslant}
\renewcommand{\ge}{\geqslant}
\newcommand{\nz}{\smallsetminus\{0\}}
\newenvironment{proof}{\textsc{Proof.}}{\ \newline\hspace*{\fill}$\boxtimes$}
\newcommand{\SIE}{\text{\rm SIE}}
\newcommand{\DIE}{\text{\rm DIE}}
\newcommand{\qqand}{\qquad\text{and}\qquad}
\begin{document}

\title{Simultaneous inhomogeneous Diophantine \\ approximation on manifolds}

\author{Victor Beresnevich\footnote{EPSRC Advanced Research Fellow, EP/C54076X/1}
\\ {\small\sc York} \and Sanju Velani\\ {\small\sc York}}

\date{{\it Dedicated to A.O.~Gelfond on what would have \\ been his 100th birthday}}

\maketitle

\begin{abstract}
In 1998, Kleinbock \& Margulis \cite{KleinbockMargulis-1998}
established a conjecture of V.G.~Sprindzuk in metrical Diophantine
approximation (and indeed the stronger Baker-Sprindzuk
conjecture). In essence the conjecture stated that the
simultaneous homogeneous Diophantine exponent $w_{0}(\vv x) = 1/n$
for almost every point $\vv x$ on a non-degenerate submanifold
$\cM$ of $\R^n$. In this paper the simultaneous inhomogeneous
analogue of Sprindzuk's conjecture is established. More precisely,
for any `inhomogeneous' vector $\bm\theta\in\R^n$  we prove that
the simultaneous inhomogeneous Diophantine exponent $w_{0}(\vv x,
\bm\theta)= 1/n$ for almost every point $\vv x$ on $M$.
%This
%constitutes the first general result in the inhomogeneous
%approximation theory for manifolds.
The key result is an {\em inhomogeneous transference principle}
which enables us to deduce that the homogeneous exponent  $w_0(\vv
x)=1/n$ for almost all $\vv x\in \cM$ if and only if for any
$\bm\theta\in\R^n$ the inhomogeneous exponent $w_0(\vv
x,\bm\theta)=1/n$ for almost all $\vv x\in \cM$. The inhomogeneous
transference principle introduced in this paper is an extremely
simplified version of that recently discovered in
\cite{Beresnevich-Velani-new-inhom}. Nevertheless, it should be emphasised  that
 the simplified version has the great advantage of
bringing to the forefront the main ideas of
\cite{Beresnevich-Velani-new-inhom} while omitting the abstract and
technical notions that come with describing the inhomogeneous
transference principle in all its glory.
\end{abstract}

%\newpage

\section{Introduction}

The metrical theory of Diophantine approximation on manifolds
dates back to 1932 with a conjecture of K.~Mahler
\cite{Mahler-1932b} in transcendence theory. The conjecture was
easily seen to be equivalent to a metrical Diophantine
approximation problem restricted to Veronese curves. Mahler's
problem remained a key open problem in metric number theory for
over 30 years and was eventually solved by Sprindzuk
\cite{Sprindzuk69} in 1964. Moreover, its solution eventually lead
Sprindzuk \cite{Sprindzuk80} to make an important general
conjecture which we now describe. For a vector $\vv x\in\R^n$, let
$$
w_0(\vv x):=\sup\{w:\|q\vv x\|<q^{-w}\text{  for i.m. }q\in\N\} \ ,
$$
and
$$
w_{n-1}(\vv x):= \sup\{w:\|\vv q.\vv x\|<|\vv q|^{-w}\text{
 for i.m. }\vv q\in\Z^n\nz\}\,.
$$
Here and elsewhere `i.m.' is the abbreviation for `infinitely
many', $|\vv q|:= \max\{|q_1|, \ldots, |q_n|\} $ is the supremum
norm, $\vv q.\vv x:= q_1 x_1 + \ldots + q_n x_n $ is the standard
inner product and $\|\cdot\|$ is the distance to the nearest
integer. For obvious reasons, $w_0( \vv x )$ is referred to as the
{\em simultaneous Diophantine exponent} and $w_{n-1}(\vv x)$ is
referred to as  the {\em dual Diophantine exponent}.

A trivial consequence of Dirichlet's theorem, or simply the
`pigeon-hole principle', is  that
\begin{equation}\label{e:001}
 w_{0}(\vv x)\ge\frac1n\qquad\text{and}\qquad w_{n-1}(\vv x)\ge
 n\qquad\text{for all }\vv x\in\R^n\,.
\end{equation}
The Diophantine exponents $w_0(\vv x)$ and $w_{n-1}(\vv x)$ can in
principle be infinite. For example, this is the case when $n=1$
and $x$ is a Liouville number. Nevertheless,  a relatively easy
consequence of the Borel-Cantelli lemma in probability theory is
that the inequalities in (\ref{e:001}) are reversed for almost all
$\vv x\in\R^n$ with respect to Lebesgue measure on $\R^n$. Thus,
\begin{equation*}%\label{e:002}
 w_{0}(\vv x)=\frac1n\qquad\text{and}\qquad w_{n-1}(\vv x)=
 n\qquad\text{for almost all }\vv x\in\R^n\,.
\end{equation*}
Sprindzuk conjectured that a similar statement holds for any
non-degenerate submanifold $\cM$ in $\R^n$ with respect to the
Lebesgue measure induced on $\cM$. Essentially, these are smooth
submanifolds of $\R^n$ which are sufficiently curved so that they
deviate from any hyperplane with a `power law' (see
\cite{Beresnevich-2002a}). Formally, a differentiable manifold
$\cM$ of dimension $d$ embedded in $\R^n$ is said to be
non-degenerate if there exists an atlas $\{\cM_i,\vv
g_i\}_{i\in\N}$ such that each map $\vv g_i:\cM_i\to U_i$, where
$U_i$ is an open subset of $\R^d$, is a diffeomorphism and $\vv
f_i:=\vv g^{-1}_i$ is non-degenerate. The map $\vv f:U\to \R^n:\vv
u\mapsto \vv f(\vv u)=(f_1(\vv u),\dots,f_n(\vv u))$ is said to be
non--degenerate at $\vv u\in U$ if there exists some $l\in\N$ such
that $\vv f$  is $l$ times continuously differentiable on some
sufficiently small ball centred at $\vv u$ and the partial
derivatives of $\vv f$ at $\vv u$ of orders up to $l$ span $\R^n$.
The map $\vv f$ is non--degenerate if it is non--degenerate at
almost every (in terms of $d$--dimensional Lebesgue measure) point
in $U$.  Any real, connected analytic manifold not contained in
any hyperplane of $\R^n$  is easily seen to be non--degenerate.
For a planar curve, non-degeneracy is simply equivalent to the
condition that the curvature is non-vanishing almost everywhere.
In short, non-degeneracy is a natural generalisation of non-zero
curvature and naturally excludes obvious counterexamples to
Sprindzuk's conjecture which we now formally state.

\bigskip

\noindent\textbf{Sprindzuk's conjecture. \ } {\it Let $\cM$ be a
non-degenerate submanifold of $\R^n$. Then
\begin{equation}\label{e:003}
 w_{0}(\vv x)=\frac1n\qquad\text{and}\qquad w_{n-1}(\vv x)=
 n\qquad\text{for almost all }\vv x\in \cM\,.
\end{equation}
}

\bigskip

\noindent In the case $\cM:=\{(x,x^2,\dots,x^n):x\in\R\}$,
Sprindzuk's conjecture reduces to  Mahler's problem.  Manifolds
that satisfy (\ref{e:003}) are simply refereed to as
\emph{extremal}. Thus, Sprindzuk's conjecture simply states that
any non-degenerate submanifold of $\R^n$ is extremal. To be
absolutely precise,  the definition of non-degeneracy given in the
form  above was actually  introduced by Kleinbock \& Margulis and
not Sprindzuk.  Essentially, Sprindzuk   considered the case of
analytic manifolds.

\medskip

%{\em Remark. }

\noindent {\em Remark. }It is worth stressing that the equalities in
(\ref{e:003}) concerning the simultaneous Diophantine exponent
$w_{0}(\vv x)$ and the dual Diophantine exponent $w_{n-1}(\vv x)$
are intimately related via a classical transference principle -- see
\S\ref{TI}. Thus, in order to establish the above conjecture it
suffices to prove either of the two equalities. In other words, the
notion of extremal is actually independent of the type of
Diophantine exponent under consideration and we may freely work
within either the simultaneous framework or the dual framework
depending on what is most convenient. A priori, this is not the case
when considering the inhomogeneous analogue of Sprindzuk's
conjecture.

\medskip

By the time Sprindzuk made his conjecture,  the case $n=2$ (planar
curves) had already been established by W.M. Schmidt
\cite{Schmidt-1964b} many years earlier. Until 1998, the numerous
contributions towards  Sprindzuk's conjecture had all been limited
to special classes of manifolds -- the most significant being that
of Beresnevich \& Bernik \cite{Beresnevichbernik2} who proved the
conjecture in the case of curves in $\R^3$.  In ground breaking
work,  Kleinbock \& Margulis \cite{KleinbockMargulis-1998}
established Sprindzuk's conjecture in full generality. Furthermore,
they answered a more general question of A. Baker concerning a
stronger notion of extremality. This is nowadays referred to as the
Baker-Sprindzuk conjecture and will not be discussed within this
paper however the inhomogeneous version of the
Baker-Sprindzuk conjecture is addressed in
\cite{Beresnevich-Velani-new-inhom}.

\bigskip

\noindent\textbf{Theorem A (Kleinbock \& Margulis)} {\it Let $\cM$
be a non-degenerate submanifold of $\R^n$. Then $\cM$ is
extremal.}

\bigskip

Kleinbock \& Margulis used ideas drawn from dynamical systems to
prove their theorem, in particular the theory of flows on
homogeneous spaces.
%-- a powerful mechanism that had previously led
%Margulis to a proof of Oppenheim's conjecture.
Independently of their work, Beresnevich \cite{Beresnevich-2002a}
used classical methods to establish a convergence
(Khintchine-Groshev type) criterion from which Theorem A readily
follows. Recently,  Kleinbock \cite{Kleinbock-2003-extremal} has
extended Theorem A to incorporate non-degenerate submanifolds of
linear subspaces of $\R^n$.  This naturally broadens the class of
extremal manifolds beyond the notion of non-degeneracy. For example,
affine subspaces of $\R^n$ are degenerate and so the result that
non-degenerate manifolds are extremal is not applicable.

\bigskip

\noindent\textbf{Theorem B (Kleinbock)} {\it Let $L$ be an affine
subspace of $\R^n$.

$(a)$ If $L$ is extremal and $\cM$ is a non-degenerate submanifold
of $L$, then $\cM$ is extremal.

$(b)$ If $L$ is not extremal, then no subset of $L$ is extremal. }

\medskip

\noindent Since $\R^n$ is itself extremal, Theorem A is obviously
covered by part (a) of Theorem B.

The main substance of the present work is to establish the
inhomogeneous analogue of Sprindzuk's conjecture in the case of
simultaneous approximation. Naturally, we begin by introducing the
simultaneous inhomogeneous Diophantine exponent. For
$\bm\theta%=(\theta_1,\dots,\theta_n)
\in\R^n$, let
$$
w_0(\vv x,\bm\theta):=\sup\big\{w\ge0:\|q\vv
x+\bm\theta\|<|q|^{-w}\text{
 for i.m. }q\in\Z\nz\big\}\,.
$$
A manifold $\cM$ is said to be \emph{simultaneously
inhomogeneously extremal}\/ ($\SIE$ for short) if for every
$\bm\theta \in\R^n$,
\begin{equation}\label{e:004}
 w_0(\vv x,\bm\theta)=\frac1n\qquad\text{for almost all }\vv x\in \cM\,.
\end{equation}

The main result of this paper is the following transference
statement.

\begin{theorem}\label{t1}
Let $\cM$ be a differentiable submanifold of $\R^n$. Then
$$
\cM\text{ is extremal }\iff \cM\text{ is \ SIE}.
$$
\end{theorem}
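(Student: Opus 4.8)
The plan is to prove the two implications of the equivalence separately; one is immediate, the other carries all the weight. The implication ``$\cM$ is $\SIE$ $\Rightarrow$ $\cM$ is extremal'' is trivial: take $\bm\theta=\vv 0$. Since $\|q\vv x\|=\|(-q)\vv x\|$, the exponents $w_0(\vv x,\vv 0)$ and $w_0(\vv x)$ coincide, so (\ref{e:004}) with $\bm\theta=\vv 0$ is precisely the statement that $w_0(\vv x)=1/n$ for almost all $\vv x\in\cM$.

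For the converse, fix $\bm\theta\in\R^n$ and let $m$ denote the measure induced on $\cM$. The bound $w_0(\vv x,\bm\theta)\ge 1/n$ holds for every $\vv x$ by the inhomogeneous form of Dirichlet's theorem, the only case needing a word being $\bm\theta\in\Z^n$, which reduces to the homogeneous one; so it suffices to prove the reverse inequality $m$-almost everywhere. Equivalently, assuming $\cM$ extremal, we must show that for every $\ve>0$ the inhomogeneous limsup set
$$
\La_{\bm\theta}(\ve):=\bigl\{\vv x\in\cM:\ \|q\vv x+\bm\theta\|<|q|^{-1/n-\ve}\ \text{for i.m. }q\in\Z\nz\bigr\}
$$
is $m$-null. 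Extremality provides exactly two facts: for every $\sigma>1/n$ the homogeneous limsup set $\La(\sigma):=\{\vv x\in\cM:\|q\vv x\|<q^{-\sigma}\ \text{for i.m. }q\in\N\}$ is $m$-null, and $\cM\cap\Q^n$, being countable, is $m$-null. Nothing else about $\cM$ will be used --- not even non-degeneracy --- which is why Theorem~\ref{t1} holds for an arbitrary differentiable submanifold.

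The engine of the transference is the trivial observation that a difference of two inhomogeneous solutions is a homogeneous solution: $\|q\vv x+\bm\theta\|<\de$ and $\|q'\vv x+\bm\theta\|<\de'$ force $\|(q-q')\vv x\|<\de+\de'$. Given $\vv x\in\La_{\bm\theta}(\ve)$, fix a sign and list the solutions $q_1<q_2<\cdots$ in $\N$ with $\|q_j\vv x+\bm\theta\|<q_j^{-1/n-\ve}$, and put $\la:=1+\tfrac{n\ve}{2}$. If the denominators do not grow too fast --- $q_{j+1}\le q_j^{\la}$ for infinitely many $j$ --- then for each such $j$, writing $s_j:=q_{j+1}-q_j$, the observation above together with $q_j\ge s_j^{1/\la}$ gives $\|s_j\vv x\|<2q_j^{-1/n-\ve}\le 2\,s_j^{-(1/n+\ve)/\la}$; since $(1/n+\ve)/\la>1/n$, this places $\vv x$ into some $\La(\sigma)$ with $\sigma>1/n$ when the $s_j$ are unbounded, and into $\Q^n$ (for then a fixed $d$ recurs with $\|d\vv x\|=0$) when they are bounded. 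Either way $\vv x$ lies in an $m$-null set. It is precisely to open up the margin $(1/n+\ve)/\la>1/n$ that the argument is run throughout with the slightly inflated function $\bm\p^{1+\ve}$ rather than $\bm\p$; this costs nothing here, since $(q^{-1/n-\ve})^{1+\ve}$ is again of the form $q^{-\sigma}$ with $\sigma>1/n$, so the relevant homogeneous limsup set is still $m$-null by extremality.

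What remains --- and what I expect to be the main obstacle --- is the genuinely singular case $q_{j+1}>q_j^{\la}$ for all large $j$, in which the solution denominators of $\vv x$ grow at least like a fixed double exponential and no difference of two of them is comparable to a good homogeneous solution. Here the difference trick is powerless, and one must argue directly with the inhomogeneous resonant neighbourhoods --- the balls about the shifted rationals $(\vv p-\bm\theta)/q$. The strategy is to cover each such ball met along a super-lacunary sequence of solutions by homogeneous balls of comparable size (morally, to convert each inhomogeneous approximation into a homogeneous one of only slightly weaker type), to control the resulting multiplicities, and thereby to show that the singular part of $\La_{\bm\theta}(\ve)$ is contained, up to an $m$-null set, in a homogeneous limsup set for $\bm\p^{1+\ve}$ --- which is $m$-null by extremality. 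This covering-and-counting step, which like everything above uses nothing about $\cM$ beyond the $m$-nullity of the homogeneous limsup sets, is the technical core of the inhomogeneous transference principle, and is exactly what the present paper extracts, in stripped-down form, from \cite{Beresnevich-Velani-new-inhom}. Granting it, $\La_{\bm\theta}(\ve)$ is a finite union of $m$-null sets, hence $m$-null; intersecting over $\ve=1/k$ as $k\to\infty$ yields $w_0(\vv x,\bm\theta)=1/n$ for $m$-almost all $\vv x\in\cM$, so $\cM$ is $\SIE$.
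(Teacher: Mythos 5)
Your trivial direction and your ``difference trick'' both match the paper, but the proof as written has a genuine gap exactly where you flag it: the case of super-lacunarily growing denominators ($q_{j+1}>q_j^{\lambda}$ eventually) is the technical core, and you only sketch a strategy and then ``grant'' it. Moreover the strategy you sketch --- covering each inhomogeneous ball around $(\vv p+\bm\theta)/q$ by homogeneous balls of comparable size so as to land in a homogeneous limsup set for $\bm\p^{1+\ve}$ --- is not how this case can be closed and there is no evident way to carry it out: for an isolated, very large $q$ the shifted rational point has no nearby homogeneous resonant set of comparable quality, which is precisely why the difference trick dies there. The paper handles this case without any homogeneous reinterpretation. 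It first reduces to curves by a Fubini/slicing argument, takes a Monge parameterisation ($f_1(x)=x$, $|\vv f'|$ bounded), proves two-sided estimates $\mu\big(B^{\bm\theta}_{\vv p,q}(\ve)\cap\cM\big)\asymp|q|^{-1-\frac1n-\ve}$ whenever the half-sized ball meets $\cM$ (Lemma~\ref{lemma1}), hence $\mu\big(B^{\bm\theta}_{\vv p,q}(\ve)\cap\cM\big)\ll|q|^{-\ve/2}\,\mu\big(B^{\bm\theta}_{\vv p,q}(\ve/2)\cap\cM\big)$, and then exploits the very \emph{disjointness} of the $\ve/2$-balls within each dyadic block $2^t\le|q|<2^{t+1}$ to bound the total contribution of a block by $\ll2^{-t\ve/2}$; summing over $t$ and applying Borel--Cantelli kills this part. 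Note that this ``disjoint'' case uses no extremality at all --- extremality (plus countability of $\Q^n\cap\cM$) is needed only in the case your difference trick already covers, which corresponds to the paper's ``non-disjoint'' balls. So what is missing from your proposal is a measure/counting argument of this kind (together with the Fubini reduction to curves that makes the two-sided ball estimates available), not a homogeneous covering.

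There is a second, smaller error: the claim that $w_0(\vv x,\bm\theta)\ge\frac1n$ for \emph{every} $\vv x$ ``by the inhomogeneous form of Dirichlet's theorem'' is false for $n\ge2$. For a singular vector $\vv x$, i.e.\ one with $\widehat w_{n-1}(\vv x)>n$, the Bugeaud--Laurent theorem (Theorem~C) gives $w_0(\vv x,\bm\theta)=1/\widehat w_{n-1}(\vv x)<\frac1n$ for almost every $\bm\theta$, so no pointwise inhomogeneous Dirichlet bound exists. The lower bound genuinely needs the extremality hypothesis plus a transference input, which is how the paper proves it (Corollary~\ref{cor3}): extremality gives $w_{n-1}(\vv x)=n$ almost everywhere on $\cM$, hence $\widehat w_{n-1}(\vv x)=n$ almost everywhere, and then $w_0(\vv x,\bm\theta)\ge1/\widehat w_{n-1}(\vv x)=\frac1n$ almost everywhere for every $\bm\theta$ (alternatively one can use the classical transference in Cassels). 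This half is easy to repair by citation, but as justified in your write-up it is wrong, whereas the super-lacunary case is an unproved core step.
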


\noindent There is clearly a trivial part of Theorem~\ref{t1} as any
simultaneously inhomogeneously extremal manifold is  extremal. This
is simply due to the fact that  $w_0(\vv x)=w_0(\vv x,\vv0)$. Thus
the converse part of Theorem 1 constitutes the main substance.
Indeed, it is rather surprising that a homogeneous statement ($\cM$
is  extremal)  implies an inhomogeneous
statement ($\cM$ is  SIE). %Indeed, it is rather surprising that
%(homogeneous) extremality  implies simultaneous inhomogeneous
%extremality.
The philosophy behind the above inhomogeneous transference
principle is broadly comparable with  the recently  discovered
mass transference principle \cite{MTP,Slicing} in metric number
theory.

The following results are simple consequences of Theorem \ref{t1}
and represent the simultaneous inhomogeneous analogues of Theorems
A and B.

\bigskip

\noindent\textbf{Corollary A } {\it Let $\cM$ be a non-degenerate
manifold  of $\R^n$. Then $\cM$  is SIE.}

\bigskip

\noindent\textbf{Corollary B } {\it Let $L$ be an extremal affine
subspace of $\R^n$ and let $\cM$ be a non-degenerate submanifold of
$L$. Then both $L$ and $\cM$ are SIE.}

\bigskip

We stress that Corollary A alone does not establish the complete
inhomogeneous analogue of Sprindzuk's conjecture. For this we would
also have to establish the analogue of Corollary A for the dual form
of approximation.  More precisely, for $\bm\theta \in\R^n$ let
$$
w_{n-1}(\vv x,\bm\theta):= \sup\{w>0:\|\vv q.\vv x +\bm\theta
\|<|\vv q|^{-w}\text{ for i.m. }\vv q\in\Z^n\nz\}\,.
$$
A manifold $\cM$ is said to be \emph{dually inhomogeneously
extremal}\/ ($\DIE$ for short) if for every $\bm\theta \in\R^n$,
\begin{equation*}%\label{e:005}
 w_{n-1}(\vv x,\bm\theta)=n \qquad\text{for almost all }\vv x\in \cM\, .
\end{equation*}
Moreover, a manifold $\cM$ is simply said to be
\emph{inhomogeneously extremal} if it is both
$\SIE$ and $\DIE$. The following statement represents the natural
inhomogeneous analogue of Sprindzuk's conjecture.

\bigskip

\noindent\textbf{Conjecture IE } {\it Let $\cM$ be a non-degenerate
submanifold of $\R^n$. Then $\cM$ is inhomogeneously extremal. }

\bigskip

The following corollary is a simple consequence of the general
framework developed in \cite{Beresnevich-Velani-new-inhom} and
together with Corollary A establishes the above conjecture.

\bigskip

\noindent\textbf{Corollary A${}^\prime$} {\it Let $\cM$ be a
non-degenerate submanifold of $\R^n$. Then $\cM$ is \DIE.}

\bigskip

Unlike in the homogeneous case, there is no classical transference
principle that allows us to deduce  Corollary A$^\prime$ from
Corollary A and vice versa. The upshot is that the two forms of
inhomogeneous extremality, namely SIE and DIE a priori have to be
treated separately. It turns out that establishing the dual form
of inhomogeneous extremality is technically far more complicated
than establishing the simultaneous form. The framework developed
in \cite{Beresnevich-Velani-new-inhom} naturally incorporates both
forms of inhomogeneous extremality and indeed other stronger
notions associated with the inhomogeneous analogue of the
Baker-Sprindzuk conjecture. In particular, the general
inhomogeneous transference principle of
\cite{Beresnevich-Velani-new-inhom} enables us to establish the
following transference for non-degenerate manifolds:
$$
\cM\text{ is extremal }\iff \cM\text{ is inhomogeneously
extremal}.
$$
This together with Theorem A clearly establishes the inhomogeneous
extremality conjecture and also enables us to deduce that:
$$
\cM\text{ is SIE }\iff \cM\text{ is DIE}.
$$
In other words, a transference principle between the two forms of
inhomogeneous extremality does exist at least for the class of
non-degenerate manifolds.

As indicated above, the inhomogeneous transference principle
introduced in this paper (Theorem \ref{t1}) is an extremely
simplified version of that in \cite{Beresnevich-Velani-new-inhom}.
Nevertheless and most importantly, the simplified version has the
great advantage of bringing to the forefront the main ideas of
\cite{Beresnevich-Velani-new-inhom} and at the same time leads to a
transparent and self contained proof of the inhomogeneous analogue
of Sprindzuk's conjecture in the case of simultaneous approximation
-- Corollary A.

\section{Diophantine exponents and transference inequalities \label{TI} }

Transference inequalities in the theory of Diophantine approximation
are often attributed to Khintchine who established the first set of
inequalities relating the dual and simultaneous Diophantine
exponents. Recently, Bugeaud \& Laurent \cite{Bugeaud-Laurent2005}
have discovered new transference inequalities which we shall
conveniently make use of in our proof of Theorem \ref{t1}. In this
section we give a brief overview of these new and classical
transference results.

We start by recalling the classical transference principle of
Khintchine.  For any $\vv x\in\R^n$, {\em Khintchine's transference
principle} relates the Diophantine exponents $w_0(\vv x)$ and
$w_{n-1}(\vv x)$  in the following way:
\begin{equation*}%\label{k}
\frac{w_{n-1}(\vv x)}{(n-1)w_{n-1}(\vv x)+n}\ \le\ w_0(\vv x)\ \le \
\frac{w_{n-1}(\vv x)-n+1}{n}\,.
\end{equation*}

\noindent These inequalities readily imply that
$$
w_0(\vv x)=\frac1n \ \iff \  w_{n-1}(\vv x)=n\  .
$$
A particular implication for us is that in order to  establish
Sprindzuk's conjecture it suffices to prove either of the above
equalities. Thus, within the homogeneous setting there is no need to
differentiate between the two implicit forms (dual and simultaneous)
of extremality because one naturally implies the other.  As already
mentioned, this is far from the situation within the inhomogeneous
setting. Nevertheless, there are various transference inequalities
between homogeneous and inhomogeneous Diophantine exponents (see
\cite{Cassels-1957}) which we are able to utilise. The recent
inequalities discovered by Bugeaud \& Laurent that relate the above
Diophantine exponents with their uniform counterparts are
particularly relevant to establishing Theorem \ref{t1}.

Uniform Diophantine exponents are defined as follows. Let $\vv
x,\bm\theta\in\R^n$. The {\em simultaneous uniform inhomogeneous
exponent} $\widehat w_0(\vv x,\bm\theta)$ is defined to be the
supremum of real numbers $w$, such that for any sufficiently large
integer $Q$ there exists an integer $q$ so that
$$
\|q\vv x+\bm\theta\|\le Q^{-w}\qqand 0<|q|\le Q\,.
$$
Let $\vv x \in\R^n$ and $\theta\in\R$. The {\em dual uniform
inhomogeneous exponent} $\widehat w_{n-1}(\vv x,\theta)$ is
defined to be the supremum of real numbers $w'$, such that for any
sufficiently large integer $Q$ there exists an integer point $\vv
q$ so that
$$
\|\vv q.\vv x+\theta\|\le Q^{-w'}\qqand 0<|\vv q|\le Q\,.
$$
If $\bm\theta=\vv0\bmod1$, %and/or $\theta=0\bmod1$
the above exponents are naturally referred to as the homogeneous
uniform Diophantine exponents and we write  $\widehat w_0(\vv x)$
for $ \widehat w_0(\vv x,\vv0)$ and $\widehat w_{n-1}(\vv x)$ for
$ \widehat w_{n-1}(\vv x,0)$.

 A trivial consequence of
Dirichlet's theorem is that
\begin{equation}\label{e:006}
 \widehat w_{0}(\vv x)\ge\frac1n\qquad\text{and}\qquad \widehat w_{n-1}(\vv x)\ge
 n\qquad\text{for all }\vv x\in\R^n\,.
\end{equation}

\noindent  Also it is  easy to see that
$$
 w_0(\vv x,\bm\theta) \ \ge \ \widehat w_0(\vv x,\bm\theta)\ \ge \ 0 \
 \,,
$$

\begin{equation}\label{e:008}
 w_{n-1}(\vv x,\theta) \ \ge \ \widehat w_{n-1}(\vv x,\bm\theta)\ \ge \ 0 \ \,.
\end{equation}

\noindent In follows from the above three inequalities, that the
homogeneous Diophantine exponents are bounded below from 0 for all
$\vv x\in\R^n$. This is not at all the case for the inhomogeneous
exponents. Indeed, if $x\in \R$ is a Liouville number then
$\widehat w_0(x,\theta)$ vanishes for almost all $\theta \in \R$
-- see \cite{Bugeaud-Laurent2005} or \cite{Cassels-1957} for
further details. A
 simplified version of the main result in
\cite{Bugeaud-Laurent2005} is as follows.

\bigskip

\noindent\textbf{Theorem C (Bugeaud \& Laurent) } {\it Let $\vv x,
\bm\theta \in\R^n$. Then
\begin{equation}\label{e:009}
w_0(\vv x,\bm\theta)\ge\frac1{\widehat w_{n-1}(\vv
x)}\qquad\text{and}\qquad\widehat w_0(\vv
x,\bm\theta)\ge\frac1{w_{n-1}(\vv x)}  \  \ ,
\end{equation}
with equalities in $(\ref{e:009})$ for almost every
$\bm\theta\in\R^n$. }

\bigskip

The following statement is a consequence of Theorem C.

\begin{corollary}\label{cor3}
Let $\cM$ be an extremal differentiable submanifold of $\R^n$. Then for every
$\bm\theta\in\R^n$ we have that
\begin{equation}\label{e:010}
w_0(\vv x,\bm\theta)\ge\frac1n\qquad\text{for almost all }\vv x\in
\cM\,.
\end{equation}
\end{corollary}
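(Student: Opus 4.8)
The plan is to deduce Corollary~\ref{cor3} directly from the first inequality in Theorem~C together with the hypothesis that $\cM$ is extremal. Fix $\bm\theta\in\R^n$. By Theorem~C, for every $\vv x\in\R^n$ we have
$$
w_0(\vv x,\bm\theta)\ \ge\ \frac1{\widehat w_{n-1}(\vv x)}\,.
$$
Thus it suffices to show that $\widehat w_{n-1}(\vv x)\le n$ for almost all $\vv x\in\cM$; indeed, combined with the trivial lower bound $\widehat w_{n-1}(\vv x)\ge n$ from \eqref{e:006}, this gives $\widehat w_{n-1}(\vv x)=n$ a.e.\ on $\cM$ and hence $w_0(\vv x,\bm\theta)\ge 1/n$ a.e.\ on $\cM$, which is \eqref{e:010}.

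The remaining point is to pass from the extremality hypothesis, which is phrased in terms of the ordinary (non-uniform) dual exponent $w_{n-1}$, to a statement about the uniform exponent $\widehat w_{n-1}$. Since $\cM$ is extremal, \eqref{e:003} gives $w_{n-1}(\vv x)=n$ for almost all $\vv x\in\cM$; equivalently, by Khintchine's transference principle (or directly), $w_0(\vv x)=1/n$ a.e.\ on $\cM$. But by \eqref{e:008} applied with $\bm\theta=\vv 0$ (the homogeneous case), $\widehat w_{n-1}(\vv x)\le w_{n-1}(\vv x)$ for every $\vv x$. Hence $\widehat w_{n-1}(\vv x)\le n$ for almost all $\vv x\in\cM$, and combining with \eqref{e:006} we obtain $\widehat w_{n-1}(\vv x)=n$ a.e.\ on $\cM$, as required.

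There is essentially no obstacle here: the corollary is a formal consequence of chaining Theorem~C with the elementary inequality $\widehat w_{n-1}\le w_{n-1}$ and the definition of extremality. The only mild subtlety is that one must use the \emph{first} inequality of Theorem~C (the one involving $\widehat w_{n-1}(\vv x)$ in the denominator, valid for \emph{all} $\bm\theta$, not merely almost all $\bm\theta$), so that the conclusion holds for every fixed $\bm\theta\in\R^n$ — which is exactly what the definition of SIE will later require. Note also that no use is made of non-degeneracy: differentiability of $\cM$ suffices, since all the input is the abstract extremality hypothesis plus Theorem~C.
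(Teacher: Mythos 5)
Your proposal is correct and follows essentially the same route as the paper: use extremality together with the trivial bound \eqref{e:006} and the inequality \eqref{e:008} (homogeneous case) to conclude $\widehat w_{n-1}(\vv x)=n$ almost everywhere on $\cM$, and then apply the first inequality of Theorem~C, which holds for every fixed $\bm\theta$. Nothing essential differs from the paper's argument.
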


\begin{proof}
By definition, for an extremal manifold $\cM$ we have that
$w_{n-1}(\vv x)=n$ for almost all $\vv x\in \cM$. This together with
(\ref{e:006}) and (\ref{e:008}), implies  that $\widehat w_{n-1}(\vv
x)=n$ for almost all $\vv x\in \cM$. Thus, for every
$\bm\theta\in\R^n$
\begin{equation*}\label{e:011}
 w_0(\vv x,\bm\theta)\ \stackrel{(\ref{e:009})}{\ge} \
\frac1{\widehat w_{n-1}(\vv x)} \ = \ \frac1n \qquad\text{for
almost all }\vv x\in \cM\,.
\end{equation*}
\end{proof}

\medskip

In view of Corollary~\ref{cor3} and the fact that $w_0(\vv
x)=w_0(\vv x,\vv0)$, the proof of Theorem~\ref{t1} is reduced to
establishing the following statement.

\bigskip

\noindent\textbf{Theorem \ref{t1}\!*} {\it Let $\cM$ be a
differentiable submanifold of $\R^n$. If $\cM$ is extremal, then
for every $\bm\theta\in\R^n$ we have that
\begin{equation*}\label{e:012}
    w_0(\vv x,\bm\theta)\le\frac1n\qquad\text{for almost all }\vv x\in \cM\,.
\end{equation*}
}

\bigskip

\noindent {\em An important remark. } It is worth pointing out
that Corollary \ref{cor3}, which allows us to reduce Theorem
\ref{t1} to Theorem \ref{t1}\!*, can in fact be established
without appealing to Theorem C. Indeed, a proof can be given which
only makes use of classical transference inequalities; namely
Theorem VI of Chapter 5 in \cite{Cassels-1957}. Thus, the proof of
Theorem \ref{t1} is not actually dependent on the recent
developments regarding transference inequalities. However, given
the existence of Theorem C it would be rather absurd to make no
use of it. Furthermore, Theorem C actually gives us information
beyond inequality (\ref{e:010}) that supports our main result
(Theorem~\ref{t1}). More precisely, it enables us to deduce that
inequality (\ref{e:010}) is in fact an equality for almost all
$\bm\theta$. The real significance of Theorem~\ref{t1} is
therefore in establishing (\ref{e:004}) for \emph{all}\/
$\bm\theta$ rather than for \emph{almost all}\/ $\bm\theta$.

\section{Proof of the Theorem \ref{t1}\!* \label{pfmain}}

Throughout, let  $\mu$ denote the induced Lebesgue measure on the
differentiable submanifold $\cM$ in $\R^n$. For $ \ve > 0$, let
$$
 \cS_n^{\bm\theta}(\ve):=\big\{\vv y\in \R^n:\|q\vv y+\bm\theta\|<|q|^{-\frac1n-\ve}\ \mbox{for
 i.m.\ }q\in\Z\nz\big\} \,
$$
and
$$
 \cS_n(\ve):=\cS_n^{\vv0}(\ve)\,.
$$
Theorem $1^*$ will follow on establishing that
\begin{equation}\label{e:013}
\mu\Big(\cS_n^{\bm\theta}(\ve)\cap\cM\Big)=0 \qquad\text{for any }
 \bm\theta\in\R^n  \text{and any } \ve>0 \ ,
\end{equation}
under the hypothesis that $\cM$ is extremal; i.e.
\begin{equation}\label{e:014}
\mu\Big(\cS_n(\ve)\cap\cM\Big)=0 \qquad\text{for  any } \ve>0 \ .
\end{equation}

%\noindent A simple consequence of the hypothesis  that $\cM$ is
%extremal is the following fact: {\it  The  intersection of $\cM$
%with any rational hyperplane $L$  is of $\mu$--measure zero; i.e. }
%\begin{equation}\label{LcapM}
%\mu (\cM \cap L ) = 0 \ .
%\end{equation}
%Indeed, let $L$ be a rational hyperplane such that $\mu (\cM \cap L
%) > 0$.  It is readily verified that for any $\vv x\in L$ the dual
%exponent $w_{n-1}(\vv x)=\infty$. Thus, by (\ref{k}), $w_0(\vv
%x)\ge\frac1{n-1}$ for all $\vv x\in L$. Thus, for any positive
%$\ve<1$
%$$
%\mu\Big(\cS_n(\ve)\cap\cM\Big) \ge \mu\Big(\cS_n(\ve)\cap\cM \cap L
%\Big) \ge \mu\Big(\cM \cap L \Big) > 0  \ .
%$$
%This is a  contradiction since $\cM $ is assumed to be extremal.

%Indeed, it readily follows from the definition of extremal that no
%rational hyperplane $L$ is extremal. In view of part (b) of
%Theorem B, no subset of $L$ is extremal. In particular, $ \cM \cap
%L $ is not extremal and thus $\cM $ is not extremal.

\subsection{Slicing into  extremal curves}

In this section we show that it is sufficient to prove
Theorem~\ref{t1} and therefore Theorem $1^*$ for extremal
differentiable curves.

Let $\cM$ be an extremal differentiable submanifold of $\R^n$ of
dimension $d>1$. Consider the  local parameterisation of $\cM$
given by
 $$\vv f:U\to\R^n\ :\vv x=(x_1,\dots,x_d)\mapsto\vv f(\vv
x)\in\cM \, $$
where $U$ is a ball in $\R^d$ and $\vv f$ is a
diffeomorphism. Since $\cM$ is extremal and the fact that sets of
full and zero measure are invariant under  diffeomorphisms, the
set
$$\cE:=\{\vv x\in U:w_0(\vv f(\vv x))=1/n\}$$
has full Lebesgue measure in $U$. Now for every $\vv
x'=(x'_2,\dots,x'_d)\in\R^{d-1}$ consider the line $L_{\vv x'}$ in
$\R^d$ given by
$$
L_{\vv x'}\,:=\{\vv x\in\R^d: x_2=x_2',\ \dots,\ x_d=x_d'\}\,.
$$
Also define $\cE_{\vv x'}=\cE\cap L_{\vv x'}$ and $U_{\vv
x'}=U\cap L_{\vv x'}$. Clearly, $U_{\vv x'}$ is either an interval
or is empty and that $\cE_{\vv x'}\subset U_{\vv x'}$. For obvious
reasons, we only consider the case when $U_{\vv
x'}\not=\emptyset$. Since $\cE$ has full measure in $U$, it
follows from Fubini's theorem that for almost every $\vv
x'\in\R^{d-1}$ the set $\cE_{\vv x'}$ has full measure in $U_{\vv
x'}$.  Now let
%The map $\vv f_{\vv x'}:=\vv
%f\raisebox{-0.5ex}{$\big|_{U_{\vv x'}}$}$,
$\vv f_{\vv x'}$ denote the map   $\vv f$ restricted to $U_{\vv
x'}$. Clearly,  $\vv f_{\vv x'}$ is  a diffeomorphism from $U_{\vv
x'}$ onto $\cM_{\vv x'}=\vv f(U_{\vv x'})$. Since $\cE_{\vv x'}$
has full measure in $U_{\vv x'}$ and $\vv f_{\vv x'}$ is a
diffeomorphism, $\cM_{\vv x'}$ is extremal for almost all $\vv
x'\in\R^{d-1}$. It follows, under the assumption that
Theorem~\ref{t1} is true for curves, that $\cM_{\vv x'}$ is SIE;
i.e. for every fixed $\bm\theta \in\R^n$ the set
$\cE^{\bm\theta}:=\{\vv x\in U:w_0(\vv f(\vv x),\bm\theta)=1/n\}$
has full Lebesgue measure in $U_{\vv x'}$ for almost all $\vv
x'\in\R^{d-1}$. On applying  Fubini's theorem,  we conclude that
$\cE^{\bm\theta}$ has full Lebesgue measure in $U$. Since the
latter holds for every $\bm\theta \in\R^n$, we have established
that $\cM$ is SIE. The upshot of this is that we only need to
establish Theorem $1^*$ in the case that $\cM$ is an  extremal
differentiable curve.

\bigskip

From this point onwards, $\cM$ is an  extremal differentiable
curve in $\R^n$ and $\bm\theta\in\R^n$ is fixed. Let $\vv
f=(f_1,\dots,f_n):I\to\R^n$ be a diffeomorphic parameterisation of
$\cM$, where $I$ is a finite interval and $\vv f(I)\subset \cM$.
Note the fact that  $\vv f(I)$ is not  necessarily  the whole of
$\cM$ is not an issue since  establishing  Theorem $1^*$ for every
patch of $\cM$ suffices. Since $\vv f$ is a diffeomorphism, the
Implicit Function Theorem enables us to change variables  so that
\begin{equation}\label{e:015}
f_1(x)=x.
\end{equation}
This is the standard  \emph{Monge parameterisation}. Also, since
we are able to work locally (i.e. on patches of $\cM$), we can
assume that
\begin{equation}\label{e:016}
    C:=\sup_{x\in I}|\vv f'(x)|<\infty\, ,
\end{equation}
as otherwise we can restrict $\vv f$ to an interval $J$ such that the closure of $J$ is contained
in  $I$. For the same reason, there is no loss of
generality in  assuming that the curve  $\cM$ is bounded.

\subsection{Auxiliary lemmas}

Given $\vv p\in\Z^n$, $q\in\Z\nz$, $\bm\theta\in\R^n$ and $\ve>0$
define the  ball

$$
 B^{\bm\theta}_{\vv p,q}(\ve):=\{\, \vv y\in \R^n:|q \vv y+\vv p+\bm\theta|<|q|^{-\frac1n-\ve} \, \}
%\qqand B_{\vv p,q}(\ve):=B^{\vv0}_{\vv p,q}(\ve)\,.
\ .
$$

\noindent In general, $B=B(\vv x,r)$ is the ball centred at $\vv
x$ and of radius $r>0$. For any $\lambda >0$,   we denote by
$\lambda B$ the ball $B$ scaled by a factor $\lambda$; i.e.
$\lambda B := B(\vv x,\lambda r)$. In this notation,
$$
B^{\bm\theta}_{\vv p,q}(\ve)= B\Big( (\vv
p+\bm\theta)/q,|q|^{-1-\frac1n-\ve}  \Big) \ .
$$

\begin{lemma}\label{lemma1}
Let $\cM=\{\vv f(x):x\in I\}$ be a curve satisfying
$(\ref{e:015})$ and $(\ref{e:016})$. Then for any choice of $\vv
p\in\Z^n$, $q\in\Z\nz$,  $\ve>0$ and  $\bm\theta\in\R^n$ we have
that
\begin{equation}\label{e:017}
    \mu(B^{\bm\theta}_{\vv p,q}(\ve)\cap \cM ) \ \le\ 2 \, n \, C \, |q|^{-1-\frac1n-\ve}\,.
\end{equation}
Furthermore, if $\frac12B^{\bm\theta}_{\vv p,q}(\ve)\cap
\cM\not=\emptyset$ then
\begin{equation}\label{e:018}
\mu( B^{\bm\theta}_{\vv p,q}(\ve)\cap \cM) \ \ge\ {\mbox{\large
$\frac12$ }}  \min\{C^{-1},|I|\}\ |q|^{-1-\frac1n-\ve}\,.
\end{equation}
\end{lemma}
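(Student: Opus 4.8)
\medskip
\noindent\textbf{Proof strategy.} The plan is to pull everything back to the parameter interval $I$ via $\vv f$ and exploit the Monge normalisation $(\ref{e:015})$, $f_1(x)=x$. Whichever normalisation one adopts for the induced measure, under $\vv f$ it takes the form $d\mu=\varrho(x)\,dx$ on $I$ with $1\le\varrho(x)\le nC$: the lower bound holds because the first coordinate of $\vv f'(x)$ is identically $1$, and the upper bound follows from $(\ref{e:016})$ (note in particular that $C\ge1$). I would fix $\vv p,q,\ve,\bm\theta$, write $B:=B^{\bm\theta}_{\vv p,q}(\ve)=B(\vv c,r)$ with $r:=|q|^{-1-\frac1n-\ve}$ and $c_1$ the first coordinate of $\vv c$, and set $E:=\{x\in I:\vv f(x)\in B\}$, so that $\mu(B\cap\cM)=\int_E\varrho(x)\,dx$.

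For $(\ref{e:017})$ I would argue as follows: since coordinate projection is $1$-Lipschitz and $f_1(x)=x$, any $x\in E$ satisfies $|x-c_1|=|f_1(x)-c_1|\le|\vv f(x)-\vv c|<r$, so $E$ lies in an interval of length $2r$; integrating the bound $\varrho\le nC$ over it gives $\mu(B\cap\cM)\le 2nCr$, which is exactly $(\ref{e:017})$.

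For $(\ref{e:018})$ I would assume $\tfrac12 B\cap\cM\ne\emptyset$ and fix $x_0\in I$ with $|\vv f(x_0)-\vv c|<r/2$. By the fundamental theorem of calculus $|\vv f(x)-\vv f(x_0)|\le C|x-x_0|$ for all $x\in I$, so $|x-x_0|\le r/(2C)$ already forces $|\vv f(x)-\vv c|<r$, i.e.\ $x\in E$; hence $E\supseteq J:=(x_0-\tfrac{r}{2C},x_0+\tfrac{r}{2C})\cap I$, and $\mu(B\cap\cM)\ge\int_J\varrho\,dx\ge|J|$ since $\varrho\ge1$. The one step requiring a little thought — and really the only obstacle in this otherwise routine lemma — is bounding $|J|$ from below when $x_0$ lies near an endpoint of $I$: since $x_0\in I$, one of the two parts of $I$ on either side of $x_0$ has length $\ge|I|/2$, whence $|J|\ge\min\{\tfrac{r}{2C},\tfrac{|I|}{2}\}=\tfrac12\min\{\tfrac rC,|I|\}$. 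Finally, $q\in\Z\nz$ gives $|q|\ge1$, hence $r\le1$, hence $\min\{\tfrac rC,|I|\}\ge r\min\{C^{-1},|I|\}$, which delivers $(\ref{e:018})$. (The truncation by $|I|$ in $(\ref{e:018})$ is precisely the shadow of this endpoint issue, and the closing use of $r\le1$ is what converts $r/C$ into the stated $|q|^{-1-\frac1n-\ve}$.)
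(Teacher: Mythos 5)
Your proof is correct and follows essentially the same route as the paper's: the upper bound via the Monge coordinate $f_1(x)=x$, which confines the relevant parameter set to an interval of length $2|q|^{-1-\frac1n-\ve}$ whose measure is then bounded by the density bound $nC$, and the lower bound via a Mean Value/Lipschitz argument around a point of $\tfrac12 B^{\bm\theta}_{\vv p,q}(\ve)\cap\cM$, using $\varrho\ge1$, the endpoint truncation by $|I|$, and $|q|\ge1$. The only immaterial differences are that you compare parameters to the ball's centre rather than to each other, and you bring in the $\min\{C^{-1},|I|\}$ at the end instead of building it into the interval radius $\delta$ from the outset as the paper does.
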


\begin{proof}
Let $x,x'\in I$ be such that $\vv f(x),\vv f(x')\in
B^{\bm\theta}_{\vv p,q}(\ve)$. Then, by the definition of
$B^{\bm\theta}_{\vv p,q}(\ve)$ together with (\ref{e:015}) and
(\ref{e:016}), we have that $|qx+p_1+\theta_1|< |q|^{-1-\ve}$ and
$|qx'+p_1+\theta_1|< |q|^{-\frac1n-\ve}$. On taking the obvious
difference we find that  $|q(x-x')|<2 \, |q|^{-\frac1n-\ve}$.
Hence
\begin{equation}\label{e:019}
    |x-x'|<2 \, |q|^{-1-\frac1n-\ve}\,.
\end{equation}
Let $J$ denote the smallest interval that contains all $x$ such
that $\vv f(x)\in B^{\bm\theta}_{\vv p,q}(\ve)$. Then, in view of
(\ref{e:019}) it follows that $|J|\le2 \, |q|^{-1-\frac1n-\ve}$.
Therefore
$$
 \mu(B^{\bm\theta}_{\vv p,q}(\ve)\cap \cM) \ \le \ \int_{J}|\vv
 f'(x)|_2dx \ \stackrel{(\ref{e:016})}{\le} \ \int_Jn \, C \, dx \ \le \  n \, C \, |J| \ \le \ 2 \, n \, C \,
 |q|^{-1-\frac1n-\ve} \ ,
$$
which is precisely (\ref{e:017}).  In order to prove (\ref{e:018}),
fix as we may  a point $x\in I$ such that $\vv f(x)\in
\frac12B^{\bm\theta}_{\vv p,q}(\ve)$. Equivalently,
\begin{equation}\label{e:020}
    |q\vv f(x)+\vv p+\bm\theta|< {\mbox{\large
$\frac12$ }} |q|^{-\frac1n-\ve}\,.
\end{equation}
Now take any $x'\in I$ such that $|x-x'|<\delta \,
|q|^{-1-\frac1n-\ve}$, where $\delta:=\frac12\min\{C^{-1},|I|\}$.
By the Mean Value Theorem and (\ref{e:016}), we have that
\begin{equation}\label{e:021}
    |\vv f(x')-\vv f(x)| \ \le \  C \, \delta \,  |q|^{-1-\frac1n-\ve} \ \le \  {\mbox{\large
$\frac12$ }} \, |q|^{-1-\frac1n-\ve}\ .
\end{equation}
On combining  (\ref{e:020}) and (\ref{e:021}),  we obtain that
$$
|q\vv f(x')+\vv p+\bm\theta| \ = \ |(q\vv f(x)+\vv
p+\bm\theta)-q(\vv f(x)-\vv f(x'))| \ < \  {\mbox{\large $\frac12$
}}  |q|^{-\frac1n-\ve}+ {\mbox{\large $\frac12$ }}
|q|^{-\frac1n-\ve}\ = \  |q|^{-\frac1n-\ve}\ .
$$
Thus, for any $x'\in J':=\{x'\in I:|x-x'|<\delta \,
|q|^{-1-\frac1n-\ve}\}$ we have that $\vv f(x')\in
B^{\bm\theta}_{\vv p,q}(\ve)$. Since $\delta<|I|/2$,  we have that
$|J'|\ge \delta q^{-1-\frac1n-\ve}$. Therefore,
$$
 \mu(B^{\bm\theta}_{\vv p,q}(\ve)\cap \cM) \ \ge \ \int_{J'}|\vv
 f'(x)|_2dx \ \stackrel{f_1(x)=x}{\ge} \ \int_{J'}dx \ = \  |J'| \ \ge \  \delta
 \, |q|^{-1-\frac1n-\ve}    \  .
$$
This is precisely   (\ref{e:018}) and completes the proof of the
lemma.
\end{proof}

The following statement is an immediate consequence of the Lemma
\ref{lemma1}.

\begin{lemma}\label{lemma1+}
Let $\cM=\{\vv f(x):x\in I\}$ be a curve satisfying
$(\ref{e:015})$ and $(\ref{e:016})$. Then for any choice of $\vv
p\in\Z^n$,  $q\in\Z$,  $\ve>0$ and $\bm\theta\in\R^n$ such that
\begin{equation}\label{e:022}
    |q| \; > \; \frac2\ve
\end{equation}
and $B^{\bm\theta}_{\vv p,q}(\ve)\cap \cM\not=\emptyset$, we have
\begin{equation*}%\label{e:023}
\mu(B^{\bm\theta}_{\vv p,q}(\ve)\cap \cM) \ \le\ \frac{4 \, n \,
C}{\min\{C^{-1},|I|\}} \ \  |q|^{-\ve/2} \ \mu(B^{\bm\theta}_{\vv
p,q}(\ve/2)\cap \cM) \ .
\end{equation*}
\end{lemma}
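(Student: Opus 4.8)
The plan is to bound $\mu(B^{\bm\theta}_{\vv p,q}(\ve)\cap\cM)$ from above and $\mu(B^{\bm\theta}_{\vv p,q}(\ve/2)\cap\cM)$ from below by two applications of Lemma~\ref{lemma1}, and then simply to divide one estimate by the other.

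For the upper bound I would quote $(\ref{e:017})$ directly with the given $\ve$, giving $\mu(B^{\bm\theta}_{\vv p,q}(\ve)\cap\cM)\le 2nC|q|^{-1-\frac1n-\ve}$. For the lower bound I would quote $(\ref{e:018})$ but with $\ve$ replaced by $\ve/2$, giving $\mu(B^{\bm\theta}_{\vv p,q}(\ve/2)\cap\cM)\ge\tfrac12\min\{C^{-1},|I|\}|q|^{-1-\frac1n-\ve/2}$. Dividing the first estimate by the second produces the numerical constant $\frac{2nC}{\frac12\min\{C^{-1},|I|\}}=\frac{4nC}{\min\{C^{-1},|I|\}}$ together with the power $|q|^{(-1-\frac1n-\ve)-(-1-\frac1n-\ve/2)}=|q|^{-\ve/2}$, which is exactly the inequality asserted in the Lemma. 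So, modulo the legitimacy of the lower bound, the argument is a two-line substitution, consistent with the claim that it is ``an immediate consequence''.

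The one genuine step is therefore to justify applying $(\ref{e:018})$ with $\ve/2$ in place of $\ve$, i.e. to verify its hypothesis that $\tfrac12 B^{\bm\theta}_{\vv p,q}(\ve/2)\cap\cM\neq\emptyset$, and this is where condition $(\ref{e:022})$ is used. The balls $B^{\bm\theta}_{\vv p,q}(\ve)$ and $B^{\bm\theta}_{\vv p,q}(\ve/2)$ are concentric, with radii $|q|^{-1-\frac1n-\ve}$ and $|q|^{-1-\frac1n-\ve/2}$ respectively; once $|q|$ is large enough in terms of $\ve$ the radius $\tfrac12|q|^{-1-\frac1n-\ve/2}$ of $\tfrac12 B^{\bm\theta}_{\vv p,q}(\ve/2)$ dominates $|q|^{-1-\frac1n-\ve}$, so that $B^{\bm\theta}_{\vv p,q}(\ve)\subseteq\tfrac12 B^{\bm\theta}_{\vv p,q}(\ve/2)$, whence the standing hypothesis $B^{\bm\theta}_{\vv p,q}(\ve)\cap\cM\neq\emptyset$ forces $\tfrac12 B^{\bm\theta}_{\vv p,q}(\ve/2)\cap\cM\neq\emptyset$. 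I expect this containment — and pinning down exactly how the size condition $(\ref{e:022})$ secures it — to be the only non-routine point; everything else is bookkeeping with the two halves of Lemma~\ref{lemma1}.
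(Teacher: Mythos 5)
Your argument is exactly the paper's: its proof of Lemma~\ref{lemma1+} is the one-line observation that (\ref{e:022}) gives $B^{\bm\theta}_{\vv p,q}(\ve)\subset\frac12B^{\bm\theta}_{\vv p,q}(\ve/2)$, after which (\ref{e:017}) with $\ve$ and (\ref{e:018}) with $\ve/2$ are combined precisely as you do. On the one point you left open: since the two balls are concentric, the containment amounts to $|q|^{-1-\frac1n-\ve}\le\frac12\,|q|^{-1-\frac1n-\ve/2}$, i.e.\ $|q|^{\ve/2}\ge2$, i.e.\ $|q|\ge 4^{1/\ve}$; as literally written, $|q|>2/\ve$ does not imply this (e.g.\ $\ve=0.1$, $q=21$ gives $21^{0.05}<2$), so (\ref{e:022}) should be read as ``$|q|$ sufficiently large in terms of $\ve$'' (or strengthened to $|q|\ge4^{1/\ve}$), which is harmless since in its only application the lemma is used with $2^t\to\infty$.
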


\begin{proof}
Simply note that (\ref{e:022}) implies that $B^{\bm\theta}_{\vv
p,q}(\ve)\subset \frac12B^{\bm\theta}_{\vv p,q}(\ve/2)$ and apply
Lemma~\ref{lemma1}.
\end{proof}

\subsection{The disjoint and non-disjoint balls}

The following decomposition of $\cS_n^{\bm\theta}(\ve) \cap \cM $ represents
the key component in establishing Theorem \ref{t1}*. As we shall see, it
is extremely simple yet very  effective.

 Fix an $\ve>0$. The set $\cS_n^{\bm\theta}(\ve)  \cap \cM  $ can be written in
 the following manner to bring to the forefront its limsup nature:

\begin{equation}\label{e:024}
 \La_n^{\bm\theta}(\ve) := \bigcap_{s=1}^\infty\ \bigcup_{q\ge s}\bigcup_{\vv
 p\in\Z^n} B^{\bm\theta}_{\vv p,q}(\ve)\cap \cM\,.
\end{equation}
Since $\cM$ is bounded, for each $q$ the above union over $\vv p
\in\Z^n $ is in fact finite. We now  make a crucial distinction
between two natural types of balls appearing in (\ref{e:024}).

Fix  $\vv p \in\Z^n $ and $q\in\Z\nz$. Clearly,  there exists a unique integer $t=t(q)$  such that
$2^t\le |q|< 2^{t+1}$. The ball $B^{\bm\theta}_{\vv p,q}(\ve)$
is said to be  \emph{disjoint}\/ if for every $q'\in\Z$ with
$2^t\le |q'|<2^{t+1}$ and every $\vv p'\in\Z^n$
 \begin{equation*}%\label{e:025}
  B^{\bm\theta}_{\vv p,q}(\ve/2)\cap B^{\bm\theta}_{\vv
  p',q'}(\ve/2)\cap \cM=\emptyset\,.
 \end{equation*}
Otherwise, the ball $B^{\bm\theta}_{\vv p,q}(\ve)$ is said to be
\emph{non-disjoint}.

Naturally, the notion of disjoint and non-disjoint balls enables us to
decompose the set $ \La_n^{\bm\theta}(\ve)$ into two limsup subsets:
%leads to following limsup decomposition of the set $\La_n^{\bm\theta}(\ve)$:
\begin{equation*}%\label{e:026}
\cD^{\bm\theta}_{n}(\ve) \ := \ \bigcap_{s=0}^\infty\
\bigcup_{t\ge s}\ \bigcup_{2^t\le
|q|<2^{t+1}}\bigcup_{\stackrel{\scriptstyle\vv
 p\in\Z^n}{B^{\bm\theta}_{\vv p,q}(\ve)\text{ is disjoint}}}  \!\!\!\!\!\! B^{\bm\theta}_{\vv p,q}(\ve)\cap \cM\,,
\end{equation*}
and
\begin{equation*}%\label{e:027}
\cN^{\bm\theta}_{n}(\ve) \ = \ \bigcap_{s=0}^\infty\
\bigcup_{t\ge s}\ \bigcup_{2^t\le
|q|<2^{t+1}}\bigcup_{\stackrel{\scriptstyle\vv
 p\in\Z^n}{B^{\bm\theta}_{\vv p,q}(\ve)\text{ is non-disjoint}}} \!\!\!\!\!\!\!\!\!  B^{\bm\theta}_{\vv p,q}(\ve)\cap \cM\,.
\end{equation*}

\noindent Formally,
$$
\cS_n^{\bm\theta}(\ve)  \cap \cM \  = \   \La_n^{\bm\theta}(\ve) \ =  \  \cD^{\bm\theta}_{n}(\ve) \cup \cN^{\bm\theta}_{n}(\ve)  \ .
$$

\bigskip

\subsection{The finale}

\noindent Our aim is to establish (\ref{e:013}). In view of the above decomposition of   $\La_n^{\bm\theta}(\ve) $, this will
clearly follow on showing that
\begin{equation*}\label{e:SV027}
\mu (\cN^{\bm\theta}_{n}(\ve) )   \ = \ \mu
(\cD^{\bm\theta}_{n}(\ve) ) \ = \ 0 \ .
\end{equation*}
Naturally, we deal with the disjoint and non-disjoint sets separately.

\medskip

\noindent\textbf{The disjoint case: } By the definition of
disjoint balls, for every fixed $t$ we have that
\begin{equation*}\label{e:028}
\begin{array}{rcl}
  \displaystyle\sum_{2^t\le |q|<2^{t+1}}\sum_{\stackrel{\scriptstyle\vv
 p\in\Z^n}{B^{\bm\theta}_{\vv p,q}(\ve)\text{ is disjoint}}} \!\!\!\!  \mu(B^{\bm\theta}_{\vv p,q}(\ve/2)\cap
 \cM ) & = & \displaystyle \mu \Big(\bigcup_{2^t\le |q|<2^{t+1}}\bigcup_{\stackrel{\scriptstyle\vv
 p\in\Z^n}{B^{\bm\theta}_{\vv p,q}(\ve)\text{ is disjoint}}} \!\!\!\!\!\!\!\! B^{\bm\theta}_{\vv p,q}(\ve/2)\cap
 \cM\Big) \\[8ex]
   & \le & \mu(\cM) \ < \ \infty  \ .
\end{array}
\end{equation*}
This together with Lemma~\ref{lemma1+}, implies that for $2^t > 2/\ve$
$$
  \sum_{2^t\le |q|<2^{t+1}}\sum_{\stackrel{\scriptstyle\vv
 p\in\Z^n}{B^{\bm\theta}_{\vv p,q}(\ve)\text{ is disjoint}}} \mu(B^{\bm\theta}_{\vv p,q}(\ve)\cap
 \cM) \ \ll \  2^{-t\ve/2}\, .
$$
The implied constant in the Vinogradov symbol $\ll$ does not depend
on $t$. Therefore,
\begin{equation}\label{e:029}
\sum_{t\ge s}  \  \sum_{2^t\le
|q|<2^{t+1}}\sum_{\stackrel{\scriptstyle\vv
 p\in\Z^n}{B^{\bm\theta}_{\vv p,q}(\ve)\text{ is disjoint}}} \!\!\!\!\! \mu(B^{\bm\theta}_{\vv p,q}(\ve)\cap
 \cM ) \ \ll \ \sum_{t\ge s }2^{-t\ve/2} \ \to \ 0 \quad\text{as }
 s \to\infty\,.
\end{equation}
By definition,
\begin{equation}\label{e:030}
\cD^{\bm\theta}_{ n}(\ve) \ \subset \  \bigcup_{t\ge s}\
\bigcup_{2^t\le |q|<2^{t+1}}\bigcup_{\stackrel{\scriptstyle\vv
 p\in\Z^n}{B^{\bm\theta}_{\vv p,q}(\ve)\text{ is disjoint}}} \!\!\!\! B^{\bm\theta}_{\vv p,q}(\ve)\cap \cM
\end{equation}
for arbitrary $s$ and by (\ref{e:029}) the measure of the right hand side  of
(\ref{e:030}) tends to $0$ as $s\to\infty$. Therefore, the left hand side
of (\ref{e:030}) must have zero measure; i.e.
\begin{equation*}\label{e:sv029}
\mu (\cD^{\bm\theta}_{ n}(\ve) )  \ = \  0  \ .
\end{equation*}

\bigskip

\noindent\textbf{The non-disjoint case: } Let $B^{\bm\theta}_{\vv
p,q}(\ve)$ be a non-disjoint ball and let $t=t(q)$ be  as above. Clearly
$$
    B^{\bm\theta}_{\vv p,q}(\ve)\subset B^{\bm\theta}_{\vv
    p,q}(\ve/2)\,.
$$
By the definition of non-disjoint balls, there is another ball
$B^{\bm\theta}_{\vv p',q'}(\ve/2)$ with $2^t\le |q'|<2^{t+1}$ such
that

\begin{equation}\label{e:031}
    B^{\bm\theta}_{\vv p,q}(\ve/2)\cap B^{\bm\theta}_{\vv
    p',q'}(\ve/2)\cap \cM\not=\emptyset\,.
\end{equation}

\noindent It is easily seen  that $q'\not=q$, as otherwise we would have that $B^{\bm\theta}_{\vv
p,q}(\ve/2)\cap B^{\bm\theta}_{\vv p',q}(\ve/2)=\emptyset$.

Take any point $ \vv y $ in the non-empty set appearing in (\ref{e:031}).
By the definition of $B^{\bm\theta}_{\vv p,q}(\ve/2)$ and
$B^{\bm\theta}_{\vv p',q'}(\ve/2)$, it follows that
\begin{equation*}\label{e:032}
|q\vv y +\vv p+\bm\theta| \ < \ |q|^{-1-\frac\ve2} \ \le \
2^{t(-\frac1n- \frac\ve2)}
\end{equation*}
and
\begin{equation*}\label{e:033}
|q'\vv y +\vv p'+\bm\theta| \ < \ |q'|^{-1-\frac\ve2} \ \le \
2^{t(-\frac1n-\frac\ve2)}\,.
\end{equation*}
On combining  these inequalities  in the obvious manner, we deduce that
\begin{equation}\label{e:034}
|\underbrace{(q-q')}_{q''}\vv y +\underbrace{(\vv p-\vv p')}_{\vv
p''}| \ < \ 2\cdot 2^{t(-\frac1n-\frac\ve2)}  \ < \
2^{(t+2)(-\frac1n-\frac\ve3)}
\end{equation}
for all $t$ sufficiently large. Furthermore,  $0<|q''|\le 2^{t+2}$ which together with
(\ref{e:034}) yields that
\begin{equation*}\label{ee:033}
|q''\vv y +\vv p''| \ < \ |q''|^{-\frac1n-\frac\ve3}\,.
\end{equation*}
If the latter inequality holds for infinitely many different
$q''\in\Z$,  then $\vv y \in \cS_n(\ve/3) \cap \cM$.    Otherwise,
there is  a fixed pair $(\vv p'', q'')  \in \Z^n \times \Z\nz$  such
that (\ref{e:034}) is satisfied  for infinitely many $t$. Thus, we
must have that  $q''\vv y+\vv p''=0$ and so  $\vv y$ is a rational
point.  The upshot of the non-disjoint case is that
$$
\cN^{\bm\theta}_{n}(\ve) \ \subset \  (\cS_n(\ve/3) \cap \cM )  \
\cup   \  ( \Q^n \cap \cM ) \  ,
$$
where $\Q^n$ is the set of rational points in $\R^n$. In view of
(\ref{e:014}) and since $\Q^n$ is countable, it follows that
$$
\mu(\cN^{\bm\theta}_{n}(\ve)) \ = \ 0 \ .
$$

\bigskip

\noindent This together with the analogues statement for $\cD^{\bm\theta}_{ n}(\ve)$ %the disjoint case
establishes
(\ref{e:013}) and thereby completes  the proof of  Theorem \ref{t1}\!*.
\newline\hspace*{\fill}$\boxtimes$

\bigskip

\noindent{\bf Acknowledgements. \ } We would like to thank the
organisers of the conference `Diophantine and Analytical Problems
in Number Theory' held at Moscow State University (Jan. 29 - Feb.
2, 2007) for giving us the opportunity to give a talk -- it was a
most enjoyable experience. Also, SV thanks the Ministry of
Internal Affairs for their kind invitation to visit Russia and
both the Royal Society and the Clay Mathematics Institute for their generous
financial support. Finally, SV must thank his young girls --
Ayesha and Iona -- who really wanted to come to Moscow to buy more
Russian dolls and play in the snow. Some other time girls --
that's a promise!

\providecommand{\bysame}{\leavevmode\hbox
to3em{\hrulefill}\thinspace}
\providecommand{\MR}{\relax\ifhmode\unskip\space\fi MR }
% \MRhref is called by the amsart/book/proc definition of \MR.
\providecommand{\MRhref}[2]{%
  \href{http://www.ams.org/mathscinet-getitem?mr=#1}{#2}
} \providecommand{\href}[2]{#2}

\vspace{5mm}

{ \small

\noindent Victor V. Beresnevich: Department of Mathematics,
University of York,

\vspace{-2mm}

\noindent\phantom{Victor V. Beresnevich: }Heslington, York, YO10
5DD, England.

%\vspace{0mm}

\noindent\phantom{Victor V. Beresnevich: }e-mail: vb8@york.ac.uk

\vspace{5mm}

\noindent Sanju L. Velani: Department of Mathematics, University
of York,

\vspace{-2mm}

 ~ \hspace{17mm}  Heslington, York, YO10 5DD, England.

%\vspace{0mm}

 ~ \hspace{17mm} e-mail: slv3@york.ac.uk

 }

\end{document}